\newtheorem{theorem}{Theorem}[section]
\newtheorem{lemma}{Lemma}[section]
\newtheorem{corollary}{Corollary}[section]
\newtheorem{proposition}{Proposition}[section]
\numberwithin{equation}{section}
\begin{document}
	
\title{More accurate  numerical radius inequalities}
\author{Mohammad Sababheh and Hamid Reza Moradi }
\subjclass[2010]{Primary 47A12, Secondary 47A30, 15A60,  47A63.}
\keywords{Numerical radius,  operator norm, Hermite-Hadamard inequality.}
\maketitle

\begin{abstract}
In this article, we present some new general forms of numerical radius inequalities for Hilbert space operators. The significance of these inequalities follow from the way they extend and refine some known results in this field. Among other inequalities, it is shown that if $A$ is a bounded linear operator on a complex Hilbert space, then
\[{{w}^{2}}\left( A \right)\le \left\| \int_{0}^{1}{{{\left( t\left| A \right|+\left( 1-t \right)\left| {{A}^{*}} \right| \right)}^{2}}dt} \right\|\le \frac{1}{2}\left\| \;{{\left| A \right|}^{2}}+{{\left| {{A}^{*}} \right|}^{2}} \right\|\]
where $w\left( A \right)$ and $\left\| A \right\|$ are the numerical radius and the usual operator norm of $A$, respectively. 
\end{abstract}


\section{Introduction}
Let $\mathcal{B}(\mathcal{H})$ denote the $C^*$ algebra of all bounded linear operators on a complex Hilbert space $\mathcal{H}.$ An operator $A\in\mathcal{B}(\mathcal{H})$ is associated with some numerical inequalities to better understand these operators and to facilitate the comparison between such operators.

In this article, we investigate the (usual) operator norm $\|A\|$ and the numerical radius $w(A)$ of the operator $A$, defined respectively by
$$\|A\|=\sup_{\|x\|=1}\|Ax\|\;{\text{and}}\; w(A)=\sup_{\|x\|=1}|\left<Ax,x\right>|;\;x\in\mathcal{H}.$$
These quantities define equivalent norms on $\mathcal{B}(\mathcal{H})$ as one has the equivalence inequalities
\begin{equation}\label{equiv_ineq_intro}
\frac{1}{2}\|A\|\leq w(A)\leq \|A\|;\;A\in\mathcal{B}(\mathcal{H}).
\end{equation}

Such inequalities are important as one can have upper or lower bounds of one quantity in terms of the other. Consequently, having sharper bounds is highly demanded in this field. We refer the reader to \cite{golla,gau,drag_1,3,1,moradi1,sab,shabrawi} as a sample of treatments of this interest.

In \cite{1}, Kittaneh proved that for any $A\in \mathcal{B}\left( \mathcal{H} \right)$,
\begin{equation}\label{kitt_ineq_intro}
{{w}^{2}}\left( A \right)\le \frac{1}{2}\left\| {{\left| A \right|}^{2}}+{{\left| {{A}^{*}} \right|}^{2}} \right\|.
\end{equation}
Here $\left| A \right|$  stands for the positive  operator ${{\left( {{A}^{*}}A \right)}^{\frac{1}{2}}}$. This inequality is sharper than the right inequality in \eqref{equiv_ineq_intro} since $\left\| {{\left| A \right|}^{2}}+{{\left| {{A}^{*}} \right|}^{2}} \right\|\le 2{{\left\| A \right\|}^{2}}$.

The main goal of this article is two folded. First, we show that \eqref{kitt_ineq_intro} follows from a more general convex form and, second, to present refined versions of   \eqref{kitt_ineq_intro} and the general form as well.

More precisely, we show that if $f:[0,\infty)\to \mathbb{R}$ is an increasing operator convex function, then for $A\in\mathcal{B}(\mathcal{H})$,
\begin{equation}\label{first_ineq_intro}
f\left( w\left( A \right) \right)\le \left\| \int_{0}^{1}{f\left( t\left| A \right|+\left( 1-t \right)\left| {{A}^{*}} \right| \right)dt} \right\|\le \frac{1}{2}\left\| f\left( \left| A \right| \right)+f\left( \left| {{A}^{*}} \right| \right) \right\|.
\end{equation}

Then upon selecting certain functions, we obtain specific inequalities. For example, letting $f(t)=t^2$ implies and refines \eqref{kitt_ineq_intro}.

With this theme, we present several new convex inequalities.

Our arguments make use of the well known Hermite-Hadamard inequalities stating that for a convex function $f:J\to \mathbb{R}$, and for any $a,b\in J$ we have 
\begin{equation}\label{13}
f\left( \frac{a+b}{2} \right)\le \int_{0}^{1}{f\left( ta+\left( 1-t \right)b \right)dt}\le \frac{f\left( a \right)+f\left( b \right)}{2}.
\end{equation}
The following two well known inequalities for Hilbert space operators are also useful to accomplish our results. 
\begin{lemma}\label{14} (see \cite{kitt2})
Let $A\in \mathcal{B}\left( \mathcal{H} \right)$ and let  $x,y\in \mathcal{H}$ be any vectors. Then
\[\left| \left\langle Ax,y \right\rangle  \right|\le \sqrt{\left\langle \left| A \right|x,x \right\rangle \left\langle \left| {{A}^{*}} \right|y,y \right\rangle },\]
where $A^*$ is the adjoint operator of $A, |A|=(A^{*}A)^{1/2}$ and $|A^{*}|=(AA^*)^{1/2}.$
\end{lemma}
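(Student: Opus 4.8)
The plan is to prove this mixed Schwarz inequality via the polar decomposition of $A$ together with the ordinary Cauchy--Schwarz inequality in $\mathcal{H}$. Writing $A=U\left| A \right|$ with $U$ a partial isometry whose initial space is $\overline{\operatorname{ran}}\left| A \right|$, I would first factor the inner product, using self-adjointness of ${{\left| A \right|}^{1/2}}$, as
\[\left\langle Ax,y \right\rangle =\left\langle U{{\left| A \right|}^{1/2}}{{\left| A \right|}^{1/2}}x,y \right\rangle =\left\langle {{\left| A \right|}^{1/2}}x,{{\left| A \right|}^{1/2}}{{U}^{*}}y \right\rangle .\]

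Next, applying the Cauchy--Schwarz inequality to the two vectors ${{\left| A \right|}^{1/2}}x$ and ${{\left| A \right|}^{1/2}}{{U}^{*}}y$ gives
\[\left| \left\langle Ax,y \right\rangle  \right|\le \left\| {{\left| A \right|}^{1/2}}x \right\|\,\left\| {{\left| A \right|}^{1/2}}{{U}^{*}}y \right\| .\]
The first factor squares to $\left\langle \left| A \right|x,x \right\rangle$ directly. For the second, I would compute $\left\| {{\left| A \right|}^{1/2}}{{U}^{*}}y \right\|^{2}=\left\langle U\left| A \right|{{U}^{*}}y,y \right\rangle$, so that the whole argument reduces to the operator identity $U\left| A \right|{{U}^{*}}=\left| {{A}^{*}} \right|$.

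Establishing this identity is the step I expect to be the main obstacle, since $U$ is only a partial isometry and cannot be manipulated as a unitary. The route I would take is uniqueness of the positive square root: the operator $U\left| A \right|{{U}^{*}}$ is positive, and ${{\left( U\left| A \right|{{U}^{*}} \right)}^{2}}=U{{\left| A \right|}^{2}}{{U}^{*}}=A{{A}^{*}}={{\left| {{A}^{*}} \right|}^{2}}$; here the first equality uses that ${{U}^{*}}U$ fixes $\left| A \right|$ (being the orthogonal projection onto $\overline{\operatorname{ran}}\left| A \right|$) and the second is the polar decomposition $A=U\left| A \right|$. Since positive operators have unique positive square roots, $U\left| A \right|{{U}^{*}}=\left| {{A}^{*}} \right|$, and substituting this into the bound above yields exactly the claimed inequality $\left| \left\langle Ax,y \right\rangle  \right|\le \sqrt{\left\langle \left| A \right|x,x \right\rangle \left\langle \left| {{A}^{*}} \right|y,y \right\rangle }$.

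An alternative I would keep in reserve, in case the partial-isometry bookkeeping becomes delicate, is to run the same Cauchy--Schwarz estimate through the positive semidefinite $2\times 2$ operator matrix built from $\left| A \right|$, $A$, $A^{*}$ and $\left| {{A}^{*}} \right|$, whose positivity is itself verified by a factorization through $\left| A \right|$. This collapses to the same computation but repackages the square-root identity, and it generalizes cleanly toward Kato's mixed Schwarz inequality should a parametrized version be needed later.
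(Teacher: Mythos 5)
Your proof is correct. The paper itself does not prove this lemma; it states it as a known result with a citation to Kittaneh's 1988 paper, and the argument you give --- polar decomposition $A=U\left| A \right|$, Cauchy--Schwarz applied to ${{\left| A \right|}^{1/2}}x$ and ${{\left| A \right|}^{1/2}}{{U}^{*}}y$, and the identity $U\left| A \right|{{U}^{*}}=\left| {{A}^{*}} \right|$ established via uniqueness of positive square roots (using that ${{U}^{*}}U$ acts as the identity on $\overline{\operatorname{ran}}\left| A \right|$) --- is precisely the standard proof found in that reference. All steps check out, including the one you flagged as delicate.
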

\begin{lemma}\label{15}
Let $A\in \mathcal{B}\left( \mathcal{H} \right)$ be a self-adjoint operator with  spectrum contained in the interval $J$, and let $x\in \mathcal{H}$ be a unit vector. If $f$ is a convex function on $J$, then	
\[f\left( \left\langle Ax,x \right\rangle  \right)\le \left\langle f\left( A \right)x,x \right\rangle.\]
\end{lemma}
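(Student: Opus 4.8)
The plan is to reduce this operator statement to the classical scalar Jensen inequality by means of a supporting-line argument together with the continuous functional calculus. First I would set $c=\left\langle Ax,x\right\rangle$ and observe that, since $A$ is self-adjoint with $\sigma(A)\subseteq J$ and $x$ is a unit vector, the scalar $c$ lies in the numerical range of $A$; for a self-adjoint operator this range is exactly the interval $[\min\sigma(A),\max\sigma(A)]$, which is contained in $J$ because $J$ is an interval. Hence $c\in J$, and it is meaningful to speak of the convexity of $f$ at $c$.

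Next, exploiting the convexity of $f$ on $J$, I would fix a supporting line of $f$ at $c$: there is a real number $m$ (a one-sided derivative, i.e.\ a subgradient of $f$ at $c$) such that $f(\lambda)\ge f(c)+m(\lambda-c)$ for every $\lambda\in J$, and in particular for every $\lambda\in\sigma(A)$. Writing $g(\lambda)=f(\lambda)-f(c)-m(\lambda-c)$, this says $g\ge 0$ on $\sigma(A)$, so by the functional calculus $g(A)\ge 0$, that is,
\[
f(A)\ge f(c)\,I+m\,(A-cI)
\]
as an inequality between self-adjoint operators.

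Finally I would pair this operator inequality with the unit vector $x$. Taking $\left\langle\,\cdot\,x,x\right\rangle$ and using $\left\langle x,x\right\rangle=1$ together with $\left\langle Ax,x\right\rangle=c$, the linear term collapses:
\[
\left\langle f(A)x,x\right\rangle\ge f(c)+m\bigl(\left\langle Ax,x\right\rangle-c\bigr)=f(c)=f\bigl(\left\langle Ax,x\right\rangle\bigr),
\]
which is precisely the claimed inequality.

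The point I expect to require the most care is the passage from ``$g$ nonnegative on $\sigma(A)$'' to ``$g(A)\ge 0$'', together with the verification that $c\in J$ so that a supporting line is available; both are standard consequences of the spectral theorem for bounded self-adjoint operators, but they must be invoked correctly. As an alternative fully measure-theoretic route, one could introduce the spectral probability measure $\mu_x(\cdot)=\left\langle E(\cdot)x,x\right\rangle$ attached to the spectral resolution $A=\int_J\lambda\,dE(\lambda)$, rewrite the two sides as $f\!\left(\int_J\lambda\,d\mu_x\right)$ and $\int_J f\,d\mu_x$, and conclude by the classical Jensen inequality; the supporting-line version, however, keeps everything at the operator level and avoids explicit integration.
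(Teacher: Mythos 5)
Your argument is correct. Note that the paper itself offers no proof of this lemma: it is stated as one of two ``well known inequalities'' (it is the standard Jensen-type, or Mond--Pe\v{c}ari\'{c}, inequality), so there is no in-paper argument to compare against. Your supporting-line proof is exactly the standard route: $c=\left\langle Ax,x\right\rangle$ lies in $[\min\sigma(A),\max\sigma(A)]\subseteq J$ because $A-\min\sigma(A)I\ge 0$ and $\max\sigma(A)I-A\ge 0$; a subgradient $m$ at $c$ gives $f(\lambda)\ge f(c)+m(\lambda-c)$ on $\sigma(A)$; the functional calculus turns this into $f(A)\ge f(c)I+m(A-cI)$; and pairing with the unit vector $x$ kills the linear term. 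The only point worth flagging beyond what you already note is the degenerate case where $c$ is an endpoint of $J$ at which the one-sided derivative of $f$ is infinite (e.g.\ $f(t)=-\sqrt{t}$ at $t=0$), so that no finite supporting line exists: there one observes that $\left\langle (A-cI)x,x\right\rangle=0$ with $A-cI$ semidefinite forces $Ax=cx$, and the inequality holds with equality. Your alternative via the spectral measure $\mu_x$ and classical Jensen absorbs this case automatically. Either way the lemma is established, and for the paper's purposes (where $f$ is an increasing convex function on $[0,\infty)$ applied to positive operators) the argument goes through without incident.
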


We will also need to recall the definition of an operator convex function. Namely, a function $f:J\to \mathbb{R}$ is said to be an operator convex function if $f$ is continuous and $f\left(\frac{A+B}{2}\right)\leq \frac{f(A)+f(B)}{2}$ for all self adjoint operators $A,B\in\mathcal{B}(\mathcal{H})$ with spectra in the interval $J.$

Other results will involve numerical radius inequalities with unital positive linear mappings. Recall that a linear map $\Phi:\mathcal{B}(\mathcal{H})\to\mathcal{B}(\mathcal{K})$ is said to be positive if it maps positive operators to positive operators. That is, if $A\geq 0$ in $\mathcal{B}(\mathcal{H})$, then $\Phi(A)\geq 0$ in $\mathcal{B}(\mathcal{K}).$ If in addition $\Phi(I_{\mathcal{H}})=I_{\mathcal{K}}$ then $\Phi$ is said to be unital. In this context, $I_{\mathcal{H}}$ is the identity mapping on $\mathcal{H}.$

Among the most interesting properties of such maps is the well known Choi-Davis inequality that states \cite{choi}
\begin{equation}\label{choi-davis-intro}
f\left(\Phi(A)\right)\leq \Phi(f(A)),
\end{equation}
for the unital positive linear map $\Phi,$ the operator convex function $f:J\to\mathbb{R}$ and the self adjoint operator $A$ whose spectrum is in the interval $J$. As a consequence, for example, one has the Kadison inequality
\begin{equation}\label{kadison_intro}
\Phi^2(A)\leq \Phi(A^2).
\end{equation}

It is unfortunate that the Inequality \eqref{choi-davis-intro} is not valid for convex functions $f$. However, if $f$ is convex, instead of operator convex, then one has the weaker inequality (see \cite[lemma 2.1]{mor})
\begin{equation}\label{conv_inner_phi_intro}
f\left(\left<\Phi(A)x,x\right>\right)\leq \left<\Phi(f(A))x,x\right>;x\in\mathcal{H},\;\|x\|=1.
\end{equation}

\section{Main Results}
In this section we prove our results for convex functions, then we show how these results are related to the existed ones.
\begin{theorem}\label{thm_1}
Let $A\in \mathcal{B}\left( \mathcal{H} \right)$. If $f:[0,\infty)\to [0,\infty)$ is an increasing operator convex function, then
\begin{equation}\label{18}
f\left( w\left( A \right) \right)\le \left\| \int_{0}^{1}{f\left( t\left| A \right|+\left( 1-t \right)\left| {{A}^{*}} \right| \right)dt} \right\|\le \frac{1}{2}\left\| f\left( \left| A \right| \right)+f\left( \left| {{A}^{*}} \right| \right) \right\|.
\end{equation}
\end{theorem}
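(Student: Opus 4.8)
The plan is to prove the two inequalities in \eqref{18} separately, attacking the right-hand inequality first (it is the easier one) and then the left-hand inequality (the harder one).

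For the right-hand inequality, I would invoke the Hermite-Hadamard inequality \eqref{13} applied pointwise in the operator sense. Since $f$ is operator convex, the second inequality in \eqref{13} upgrades to an operator inequality: for the two self-adjoint operators $|A|$ and $|A^*|$, one has $\int_{0}^{1}{f\left( t\left| A \right|+\left( 1-t \right)\left| {{A}^{*}} \right| \right)dt}\le \frac{1}{2}\left( f\left( \left| A \right| \right)+f\left( \left| {{A}^{*}} \right| \right) \right)$ as an inequality between positive operators. Here I should verify that the operator-convex version of Hermite-Hadamard holds, which follows by integrating the defining inequality $f\left(\frac{X+Y}{2}\right)\le\frac{f(X)+f(Y)}{2}$ along the segment joining $|A|$ and $|A^*|$; the continuity of $f$ and the fact that the integrand is a norm-continuous operator-valued function justify pushing the inequality through the integral. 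Taking the operator norm of both sides and using its monotonicity on positive operators then yields the right-hand bound.

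\par\noindent The left-hand inequality, $f(w(A))\le\left\|\int_0^1 f(t|A|+(1-t)|A^*|)\,dt\right\|$, is where the main work lies. I would start from the definition $w(A)=\sup_{\|x\|=1}|\langle Ax,x\rangle|$ and fix a unit vector $x$. By Lemma \ref{14} (the mixed Cauchy--Schwarz inequality), $|\langle Ax,x\rangle|\le\sqrt{\langle|A|x,x\rangle\,\langle|A^*|x,x\rangle}$, so that the geometric mean of $\langle|A|x,x\rangle$ and $\langle|A^*|x,x\rangle$ controls $|\langle Ax,x\rangle|$. Since $f$ is increasing, applying $f$ preserves this, and I then want to dominate $f$ of the geometric mean by something built from the integral. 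The key link is that the geometric mean is bounded above by the integral $\int_0^1\big(t\,a+(1-t)\,b\big)\,dt=\frac{a+b}{2}$ being replaced, more usefully, by the left Hermite-Hadamard inequality: $f\left(\frac{a+b}{2}\right)\le\int_0^1 f(ta+(1-t)b)\,dt$ for the scalar convex function $f$. Using $\sqrt{ab}\le\frac{a+b}{2}$ together with monotonicity of $f$ gives $f(\sqrt{ab})\le f\left(\frac{a+b}{2}\right)\le\int_0^1 f(ta+(1-t)b)\,dt$ with $a=\langle|A|x,x\rangle$ and $b=\langle|A^*|x,x\rangle$.

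\par\noindent The final step is to recognize the scalar integral bound as a diagonal entry of the operator integral. By Lemma \ref{15}, for each fixed $t$ the convexity of $f$ gives $f\big(t\,a+(1-t)\,b\big)=f\big(\langle(t|A|+(1-t)|A^*|)x,x\rangle\big)\le\langle f(t|A|+(1-t)|A^*|)x,x\rangle$, and integrating in $t$ yields $\int_0^1 f(ta+(1-t)b)\,dt\le\left\langle\left(\int_0^1 f(t|A|+(1-t)|A^*|)\,dt\right)x,x\right\rangle\le\left\|\int_0^1 f(t|A|+(1-t)|A^*|)\,dt\right\|$, the last step using that the operator inside is positive and $\|x\|=1$. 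Chaining these gives $f(|\langle Ax,x\rangle|)\le\left\|\int_0^1 f(t|A|+(1-t)|A^*|)\,dt\right\|$ for every unit vector $x$; taking the supremum over $x$ and using that $f$ is increasing and continuous (so that $f(w(A))=f(\sup|\langle Ax,x\rangle|)=\sup f(|\langle Ax,x\rangle|)$) completes the argument. The main obstacle I anticipate is bookkeeping the interchange of $f$, the supremum, and the integral — in particular justifying the passage of the scalar Hermite-Hadamard bound through Lemma \ref{15} and then through the operator integral while keeping every inequality pointing the right way.
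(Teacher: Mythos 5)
Your proposal is correct and follows essentially the same route as the paper: the mixed Cauchy--Schwarz inequality of Lemma \ref{14}, the AM--GM inequality, the scalar left Hermite--Hadamard inequality, and Lemma \ref{15}, with operator convexity entering only for the right-hand bound. The only cosmetic difference is that the paper keeps everything at the level of the quadratic forms $\left\langle \cdot\, x,x\right\rangle$ and takes one supremum at the end, whereas you establish the right-hand inequality directly as an operator Hermite--Hadamard inequality and then take norms; both steps use the same hypothesis and are equivalent.
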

\begin{proof}
Let $x\in \mathcal{H}$ be a unit vector. Then 
\begin{equation}\label{16}
\begin{aligned}
f\left( \left| \left\langle Ax,x \right\rangle  \right| \right)&\le f\left( \sqrt{\left\langle \left| A \right|x,x \right\rangle \left\langle \left| {{A}^{*}} \right|x,x \right\rangle } \right) \quad \text{(by Lemma \ref{14})}\\ 
& \le f\left( \frac{\left\langle \left| A \right|x,x \right\rangle +\left\langle \left| {{A}^{*}} \right|x,x \right\rangle }{2} \right) \quad \text{(by AM-GM inequality)}\\ 
& \le \int_{0}^{1}{f\left( t\left\langle \left| A \right|x,x \right\rangle +\left( 1-t \right)\left\langle \left| {{A}^{*}} \right|x,x \right\rangle  \right)dt}\quad \text{(by \eqref{13})}.  
\end{aligned}
\end{equation}
On the other hand,
\[\begin{aligned}
 f\left( t\left\langle \left| A \right|x,x \right\rangle +\left( 1-t \right)\left\langle \left| {{A}^{*}} \right|x,x \right\rangle  \right)&=f\left( \left\langle \left(t\left| A \right|+\left( 1-t \right)\left| {{A}^{*}} \right|\right)x,x \right\rangle  \right) \\ 
& \le \left\langle f\left( t\left| A \right|+\left( 1-t \right)\left| {{A}^{*}} \right| \right)x,x \right\rangle  \quad \text{(by Lemma \ref{15})}\\ 
& \le t\left\langle f\left( \left| A \right| \right)x,x \right\rangle +\left( 1-t \right)\left\langle f\left( \left| {{A}^{*}} \right| \right)x,x \right\rangle ,
\end{aligned}\]
where the last inequality follows from operator convexity of $f$.
Now, integrating over $t$ on $\left[ 0,1 \right]$,
\begin{equation}\label{17}
\begin{aligned}
\int_{0}^{1}{f\left( t\left\langle \left| A \right|x,x \right\rangle +\left( 1-t \right)\left\langle \left| {{A}^{*}} \right|x,x \right\rangle  \right)}&\le \left\langle \int_{0}^{1}{f\left( t\left| A \right|+\left( 1-t \right)\left| {{A}^{*}} \right| \right)dt}x,x \right\rangle  \\ 
& \le \frac{1}{2}\left\langle f\left(\left( \left| A \right| \right)+f\left( \left| {{A}^{*}} \right| \right)\right)x,x \right\rangle .  
\end{aligned}
\end{equation}
Combining  \eqref{16} and \eqref{17}, we get
\[f\left( \left| \left\langle Ax,x \right\rangle  \right| \right)\le \left\langle \int_{0}^{1}\left\{f\left( t\left| A \right|+\left( 1-t \right)\left| {{A}^{*}} \right| \right)dt\right\}x,x \right\rangle \le \frac{1}{2}\left\langle \left(f\left( \left| A \right| \right)+f\left( \left| {{A}^{*}} \right| \right)\right)x,x \right\rangle .\]
By taking supremum we get the desired result \eqref{18}.
\end{proof}
Since the function $f(t)=t^r, 1\leq r\leq 2$ is an increasing operator convex function, Theorem \ref{thm_1} implies the following general form of Kittaneh inequality \eqref{kitt_ineq_intro}.
\begin{corollary}
Let $A\in \mathcal{B}\left( \mathcal{H} \right)$. Then for any $1\le r\le 2$,
\[{{w}^{r}}\left( A \right)\le \left\| \int_{0}^{1}{{{\left( t\left| A \right|+\left( 1-t \right)\left| {{A}^{*}} \right| \right)}^{r}}dt} \right\|\le \frac{1}{2}\left\| \;{{\left| A \right|}^{r}}+{{\left| {{A}^{*}} \right|}^{r}} \right\|.\]
In particular,
\[{{w}^{2}}\left( A \right)\le \left\| \int_{0}^{1}{{{\left( t\left| A \right|+\left( 1-t \right)\left| {{A}^{*}} \right| \right)}^{2}}dt} \right\|\le \frac{1}{2}\left\| \;{{\left| A \right|}^{2}}+{{\left| {{A}^{*}} \right|}^{2}} \right\|.\]
\end{corollary}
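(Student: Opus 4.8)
The plan is to read the Corollary as a direct specialization of Theorem \ref{thm_1} to the family of power functions $f(t)=t^{r}$, so the whole task reduces to checking that each member of this family satisfies the three hypotheses of that theorem on $[0,\infty)$: that $f$ maps $[0,\infty)$ into $[0,\infty)$, that $f$ is increasing, and that $f$ is operator convex. Once this is verified, substituting $f(t)=t^{r}$ into \eqref{18} and using $f(w(A))=w^{r}(A)$, $f(|A|)=|A|^{r}$ and $f(|A^{*}|)=|A^{*}|^{r}$ yields the displayed chain at once, and the ``in particular'' assertion is simply the case $r=2$.

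Two of the three hypotheses are essentially free. For any $r\ge 1$ the scalar function $t\mapsto t^{r}$ clearly sends $[0,\infty)$ into $[0,\infty)$, and its derivative $r t^{r-1}$ is nonnegative on $(0,\infty)$, so $f$ is increasing there. Moreover all the operators to which $f$ is applied in \eqref{18}---namely $|A|$, $|A^{*}|$, and the convex combination $t|A|+(1-t)|A^{*}|$ for $t\in[0,1]$---are positive, so their spectra lie in $[0,\infty)$ and the functional calculus defining these operators through $f$ is legitimate.

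The only substantive point is operator convexity, and here I would invoke the classical Löwner--Heinz theory of operator monotone and operator convex functions: the power function $t\mapsto t^{r}$ is operator convex on $[0,\infty)$ precisely for $r\in[1,2]$ (see, e.g., Bhatia's \emph{Matrix Analysis}, or the Hansen--Pedersen characterization). This is exactly the range in the statement, which is also why the restriction $1\le r\le 2$ cannot be relaxed within the framework of Theorem \ref{thm_1}. With operator convexity secured, all hypotheses of Theorem \ref{thm_1} are met and the corollary follows.

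The main obstacle, such as it is, is not any calculation but rather recognizing and correctly citing that $t^{r}$ is operator convex exactly on $[1,2]$: the endpoints $r=1$ (affine, hence trivially operator convex) and $r=2$ are both safe, but one must resist extending to $r>2$, where operator convexity fails. Finally, the particular case $r=2$ refines Kittaneh's inequality \eqref{kitt_ineq_intro}, since the middle quantity $\|\int_{0}^{1}(t|A|+(1-t)|A^{*}|)^{2}\,dt\|$ is then inserted between $w^{2}(A)$ and $\tfrac12\||A|^{2}+|A^{*}|^{2}\|$.
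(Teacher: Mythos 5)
Your proposal is correct and follows exactly the paper's route: the paper likewise obtains this corollary by substituting $f(t)=t^{r}$, $1\le r\le 2$, into Theorem \ref{thm_1}, noting that this function is increasing and operator convex on $[0,\infty)$. Your additional care in verifying the hypotheses and citing the Löwner--Heinz/Hansen--Pedersen characterization of the range $r\in[1,2]$ is sound but not a departure from the paper's argument.
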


Notice that the condition that $f$ is operator convex is essential to accomplish the proof of Theorem \ref{thm_1}. In the next result, we present another version for convex functions. 

\begin{theorem}\label{7}
Let $\Phi :\mathcal{B}\left( \mathcal{H} \right)\to \mathcal{B}\left( \mathcal{H} \right)$ be a unital positive linear map and let $A\in \mathcal{B}\left( \mathcal{H} \right)$. If $f:[0,\infty)\to [0,\infty)$ is an increasing convex function, then
\[\begin{aligned}
 f\left( {{w}^{2}}\left( \Phi \left( A \right) \right) \right)&\le \underset{\left\| x \right\|=1}{\mathop{\underset{x\in \mathcal{H}}{\mathop{\sup }}\,}}\,\left( \int_{0}^{1}{f\left( {{\left\| {{\Phi }^{\frac{1}{2}}}\left( t{{\left| A \right|}^{2}}+\left( 1-t \right){{\left| {{A}^{*}} \right|}^{2}} \right)x \right\|}^{2}} \right)dt} \right) \\ 
& \le \frac{1}{2}\left\| \Phi \left( f\left( {{\left| A \right|}^{2}} \right)+f\left( {{\left| {{A}^{*}} \right|}^{2}} \right) \right) \right\|.  
\end{aligned}\]
\end{theorem}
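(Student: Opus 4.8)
The plan is to localize at a fixed unit vector $x\in\mathcal{H}$ and exploit the fact that $B\mapsto\langle\Phi(B)x,x\rangle$ is a state. First I would record the identity underlying the middle quantity: since $t|A|^2+(1-t)|A^*|^2$ is positive, $\Phi$ sends it to a positive operator, and writing $\Phi^{1/2}(C)$ for $(\Phi(C))^{1/2}$ we get
\[
\left\|\Phi^{1/2}\!\left(t|A|^2+(1-t)|A^*|^2\right)x\right\|^2=\left\langle\Phi\!\left(t|A|^2+(1-t)|A^*|^2\right)x,x\right\rangle=t\,a+(1-t)\,b,
\]
where $a:=\langle\Phi(|A|^2)x,x\rangle$ and $b:=\langle\Phi(|A^*|^2)x,x\rangle$. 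Thus the inner integral is exactly $\int_0^1 f(ta+(1-t)b)\,dt$, which sits precisely in the Hermite--Hadamard setting of \eqref{13}.

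For the first (lower) inequality, the key observation is that $\psi(B):=\langle\Phi(B)x,x\rangle$ is a state on $\mathcal{B}(\mathcal{H})$: it is positive because $\Phi$ is positive, and $\psi(I)=\langle x,x\rangle=1$ because $\Phi$ is unital. The Cauchy--Schwarz inequality for states gives $|\psi(A)|^2\le\psi(A^*A)=a$; applying the same bound to $A^*$ and using that positive maps are $*$-preserving (so $\psi(A^*)=\overline{\psi(A)}$) yields $|\psi(A)|^2\le\psi(AA^*)=b$. Averaging the two bounds produces the central estimate
\[
|\langle\Phi(A)x,x\rangle|^2\le\frac{a+b}{2}.
\]
Since $f$ is increasing, combining this with the left inequality in \eqref{13} gives $f(|\langle\Phi(A)x,x\rangle|^2)\le f\!\left(\tfrac{a+b}{2}\right)\le\int_0^1 f(ta+(1-t)b)\,dt$. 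Taking the supremum over unit $x$, and using that an increasing continuous $f$ commutes with the supremum so that $\sup_x f(|\langle\Phi(A)x,x\rangle|^2)=f(w^2(\Phi(A)))$, delivers the first inequality.

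For the second (upper) inequality I would start from the right inequality in \eqref{13}, namely $\int_0^1 f(ta+(1-t)b)\,dt\le\tfrac12(f(a)+f(b))$, and then convert $f(a)$ and $f(b)$ back to operator quantities using the convexity bound \eqref{conv_inner_phi_intro}: $f(\langle\Phi(|A|^2)x,x\rangle)\le\langle\Phi(f(|A|^2))x,x\rangle$, and likewise for $|A^*|^2$. This yields $\int_0^1 f(\cdots)\,dt\le\tfrac12\langle\Phi(f(|A|^2)+f(|A^*|^2))x,x\rangle$; since the operator $\Phi(f(|A|^2)+f(|A^*|^2))$ is positive, its numerical values are dominated by its norm, and taking the supremum over $x$ finishes the argument.

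The main obstacle is the central estimate $|\langle\Phi(A)x,x\rangle|^2\le\tfrac12(a+b)$. The tempting route is a Kadison--Schwarz inequality $\Phi(A)^*\Phi(A)\le\Phi(A^*A)$, but that requires $2$-positivity of $\Phi$, which is not assumed here. The resolution is exactly the reduction to the scalar functional $\psi$: the Cauchy--Schwarz inequality $|\psi(A)|^2\le\psi(A^*A)$ holds for every state with no $2$-positivity needed, and running it for both $A$ and $A^*$ is what forces the symmetric average of $|A|^2$ and $|A^*|^2$ to appear. A secondary technical point worth flagging is the interchange of $f$ with the supremum, which relies on monotonicity together with continuity of the convex function $f$.
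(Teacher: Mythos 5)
Your proposal is correct, and the two outer steps (the Hermite--Hadamard sandwich \eqref{13} applied to $a=\langle \Phi(|A|^2)x,x\rangle$, $b=\langle \Phi(|A^*|^2)x,x\rangle$, followed by \eqref{conv_inner_phi_intro} and the norm bound for the upper estimate) coincide with what the paper does. Where you genuinely diverge is in the central estimate $|\langle \Phi(A)x,x\rangle|^2\le \tfrac{1}{2}(a+b)$. The paper obtains it through the Cartesian decomposition $A=B+iC$, the identities $\tfrac{1}{2}(A^*A+AA^*)=B^2+C^2$ and $|\langle Ax,x\rangle|^2=\langle Bx,x\rangle^2+\langle Cx,x\rangle^2$ transported through $\Phi$, Kadison's inequality $\Phi^2(B)\le\Phi(B^2)$ applied to each of $B$ and $C$, and then the scalar Jensen bound $\langle \Phi(B)x,x\rangle^2\le\langle \Phi(B)^2x,x\rangle$. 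You instead pass to the state $\psi(\cdot)=\langle\Phi(\cdot)x,x\rangle$ and invoke the Cauchy--Schwarz inequality for positive functionals, $|\psi(A)|^2\le\psi(I)\psi(A^*A)$, once for $A$ and once for $A^*$ (using that $\psi$ is hermitian so $|\psi(A^*)|=|\psi(A)|$), then average. Both arguments need only positivity and unitality of $\Phi$, and you are right that no $2$-positivity enters either way; your route is arguably more economical, since it bypasses the Cartesian decomposition and the Kadison inequality entirely and makes transparent why the symmetric average of $|A|^2$ and $|A^*|^2$ appears, whereas the paper's route stays closer to Kittaneh's original proof of \eqref{kitt_ineq_intro} and reuses machinery (\eqref{kadison_intro}, \eqref{4}, \eqref{6}) already set up in the introduction. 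Your remark that the increasing continuous $f$ commutes with the supremum is a point the paper glosses over but is needed in both versions.
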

\begin{proof}
Let $A=B+iC$  be the Cartesian decomposition of $A\in\mathcal{B}(\mathcal{H})$. That is,  $B=\frac{A+{{A}^{*}}}{2}$ and $C=\frac{A-{{A}^{*}}}{2i}$.  Then clearly,
\begin{equation}\label{4}
\frac{{{A}^{*}}A+A{{A}^{*}}}{2}={{B}^{2}}+{{C}^{2}},	
\end{equation}
and
\begin{equation}\label{6}
{{\left| \left\langle Ax,x \right\rangle  \right|}^{2}}={{\left\langle Bx,x \right\rangle }^{2}}+{{\left\langle Cx,x \right\rangle }^{2}},~\text{ }x\in \mathcal{H},~\left\| x \right\|=1.
\end{equation}
Replacing $a$ and $b$ by $\left\langle \Phi \left( {{A}^{*}}A \right)x,x \right\rangle $ and $\left\langle \Phi \left( A{{A}^{*}} \right)x,x \right\rangle $, where $x\in \mathcal{H}$ is a unit vector, in \eqref{13} we obtain
\begin{equation}\label{1}
\begin{aligned}
f\left( \frac{\left\langle \Phi \left( {{A}^{*}}A \right)x,x \right\rangle +\left\langle \Phi \left( A{{A}^{*}} \right)x,x \right\rangle }{2} \right)&\le \int_{0}^{1}{f\left( t\left\langle \Phi \left( {{A}^{*}}A \right)x,x \right\rangle +\left( 1-t \right)\left\langle \Phi \left( A{{A}^{*}} \right)x,x \right\rangle  \right)} \\ 
& \le \frac{f\left( \left\langle \Phi \left( {{A}^{*}}A \right)x,x \right\rangle  \right)+f\left( \left\langle \Phi \left( A{{A}^{*}} \right)x,x \right\rangle  \right)}{2}.  
\end{aligned}
\end{equation}
Noting \eqref{conv_inner_phi_intro}, we have
\begin{equation}\label{2}
\begin{aligned}
\frac{f\left( \left\langle \Phi \left( {{A}^{*}}A \right)x,x \right\rangle  \right)+f\left( \left\langle \Phi \left( A{{A}^{*}} \right)x,x \right\rangle  \right)}{2}&\le \frac{\left\langle \Phi \left( f\left( {{A}^{*}}A \right) \right)x,x \right\rangle +\left\langle \Phi \left( f\left( A{{A}^{*}} \right) \right)x,x \right\rangle }{2} \\ 
& = \frac{1}{2}\left\langle \Phi \left( f\left( {{A}^{*}}A \right) + f\left( A{{A}^{*}} \right) \right)x,x \right\rangle.   
\end{aligned}
\end{equation}
By combining the second inequality in \eqref{1} and \eqref{2}, then taking the supremum, we have
\begin{equation}\label{3}
\underset{\left\| x \right\|=1}{\mathop{\underset{x\in \mathcal{H}}{\mathop{\sup }}\,}}\,\left( \int_{0}^{1}{f\left( \left\langle \Phi \left( t{{A}^{*}}A+\left( 1-t \right)A{{A}^{*}} \right)x,x \right\rangle  \right)dt} \right)\le \frac{1}{2}\left\| \Phi \left( f\left( {{A}^{*}}A \right)+f\left( A{{A}^{*}} \right) \right) \right\|.
\end{equation}
On the other hand, noting that $f$ is increasing,
\begin{equation}\label{9}
\begin{aligned}
 f\left( \frac{\left\langle \Phi \left( {{A}^{*}}A \right)x,x \right\rangle +\left\langle \Phi \left( A{{A}^{*}} \right)x,x \right\rangle }{2} \right)&=f\left( \frac{\left\langle \Phi \left( {{A}^{*}}A+A{{A}^{*}} \right)x,x \right\rangle }{2} \right) \\ 
& =f\left( \left\langle \Phi \left( {{B}^{2}}+{{C}^{2}} \right)x,x \right\rangle  \right)\;({\text{by}}\;\eqref{4}) \\ 
& =f\left( \left\langle \Phi \left( {{B}^{2}} \right)x,x \right\rangle +\left\langle \Phi \left( {{C}^{2}} \right)x,x \right\rangle  \right) \\ 
& \geq f\left( \left\langle {{\Phi }^{2}}\left( B \right)x,x \right\rangle +\left\langle {{\Phi }^{2}}\left( C \right)x,x \right\rangle  \right)\;({\text{by}}\;\eqref{kadison_intro}) \\ 
& \ge f\left( {{\left\langle \Phi \left( B \right)x,x \right\rangle }^{2}}+{{\left\langle \Phi \left( C \right)x,x \right\rangle }^{2}} \right)\;({\text{by}}\;\eqref{conv_inner_phi_intro}) \\ 
& =f\left( {{\left| \left\langle \Phi \left( A \right)x,x \right\rangle  \right|}^{2}} \right)\;({\text{by}}\;\eqref{6}).  
\end{aligned}
\end{equation}
Consequently, by combining the first inequality in \eqref{1} and \eqref{9}, then taking the supremum, we get
\begin{equation}\label{5}
f\left( {{w}^{2}}\left( \Phi \left( A \right) \right) \right)\le \underset{\left\| x \right\|=1}{\mathop{\underset{x\in \mathcal{H}}{\mathop{\sup }}\,}}\,\left( \int_{0}^{1}{f\left( \left\langle \Phi \left( t{{A}^{*}}A+\left( 1-t \right)A{{A}^{*}} \right)x,x \right\rangle  \right)dt} \right).
\end{equation}
Finally,  \eqref{5} and \eqref{3} imply the desired result.
\end{proof}
\begin{corollary}
Let $\Phi :\mathcal{B}\left( \mathcal{H} \right)\to \mathcal{B}\left( \mathcal{H} \right)$ be a unital positive linear map and let $A\in \mathcal{B}\left( \mathcal{H} \right)$. Then for any $r\ge 1$,
\[{{w}^{2r}}\left( \Phi \left( A \right) \right)\le \underset{\left\| x \right\|=1}{\mathop{\underset{x\in \mathcal{H}}{\mathop{\sup }}\,}}\,\left( \int_{0}^{1}{{{\left\| {{\Phi }^{\frac{1}{2}}}\left( t{{\left| A \right|}^{2}}+\left( 1-t \right){{\left| {{A}^{*}} \right|}^{2}} \right)x \right\|}^{2r}}dt} \right)\le \frac{1}{2}\left\| \Phi \left( {{\left| A \right|}^{2r}}+{{\left| {{A}^{*}} \right|}^{2r}} \right) \right\|.\]
and
\[{{w}^{2r}}\left( A \right)\le \underset{\left\| x \right\|=1}{\mathop{\underset{x\in \mathcal{H}}{\mathop{\sup }}\,}}\,\left( \int_{0}^{1}{{{\left\| {{\left( t{{\left| A \right|}^{2}}+\left( 1-t \right){{\left| {{A}^{*}} \right|}^{2}} \right)}^{\frac{1}{2}}}x \right\|}^{2r}}dt} \right)\le \frac{1}{2}\left\| {{\left| A \right|}^{2r}}+{{\left| {{A}^{*}} \right|}^{2r}} \right\|.\]
In particular,
\[{{w}^{2}}\left( A \right)\le \underset{\left\| x \right\|=1}{\mathop{\underset{x\in \mathcal{H}}{\mathop{\sup }}\,}}\,\left( \int_{0}^{1}{{{\left\| {{\left( t{{\left| A \right|}^{2}}+\left( 1-t \right){{\left| {{A}^{*}} \right|}^{2}} \right)}^{\frac{1}{2}}}x \right\|}^{2}}dt} \right)\le \frac{1}{2}\left\| {{\left| A \right|}^{2}}+{{\left| {{A}^{*}} \right|}^{2}} \right\|.\]
\end{corollary}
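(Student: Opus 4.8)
The plan is to obtain every inequality in the statement as a direct specialization of Theorem \ref{7}, taking the increasing convex function $f(t)=t^{r}$ with $r\ge 1$ and, for the last two displays, restricting the map to the identity. First I would check that $f(t)=t^{r}$ meets the hypotheses of Theorem \ref{7}: on $[0,\infty)$ it is nonnegative and increasing, and since $f''(t)=r(r-1)t^{r-2}\ge 0$ for $r\ge 1$ it is convex, so $f:[0,\infty)\to[0,\infty)$ is an increasing convex function. It is worth emphasizing that operator convexity is \emph{not} required here, which is precisely why Theorem \ref{7}, rather than Theorem \ref{thm_1}, is the correct tool: for $1<r<2$ the function $t^{r}$ is operator convex, but Theorem \ref{7} lets us run $r$ through the whole range $r\ge 1$.

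Next I would substitute this $f$ into the three terms of Theorem \ref{7} and simplify each via the functional calculus. On the left, $f\left(w^{2}(\Phi(A))\right)=\left(w^{2}(\Phi(A))\right)^{r}=w^{2r}(\Phi(A))$. In the middle, for each fixed $t$ and unit vector $x$ the quantity $\left\|\Phi^{1/2}\left(t|A|^{2}+(1-t)|A^{*}|^{2}\right)x\right\|^{2}$ is a nonnegative scalar, so applying $f$ to it simply raises it to the power $r$ and turns the exponent $2$ into $2r$. On the right, using the functional-calculus identities $f(|A|^{2})=\left(|A|^{2}\right)^{r}=|A|^{2r}$ and $f(|A^{*}|^{2})=|A^{*}|^{2r}$ for the positive operators $|A|^{2}$ and $|A^{*}|^{2}$, the bound $\frac{1}{2}\left\|\Phi\left(f(|A|^{2})+f(|A^{*}|^{2})\right)\right\|$ becomes $\frac{1}{2}\left\|\Phi\left(|A|^{2r}+|A^{*}|^{2r}\right)\right\|$. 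This produces the first displayed inequality.

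For the second display I would take $\Phi=\mathrm{id}_{\mathcal{B}(\mathcal{H})}$, which is a unital positive linear map. Under this choice $\Phi^{1/2}(S)=(\Phi(S))^{1/2}=S^{1/2}$ for every positive $S$, so the middle term collapses to $\left\|\left(t|A|^{2}+(1-t)|A^{*}|^{2}\right)^{1/2}x\right\|^{2r}$ and the right-hand bound to $\frac{1}{2}\left\||A|^{2r}+|A^{*}|^{2r}\right\|$, yielding exactly the second inequality. Finally, setting $r=1$ in this last display gives the ``In particular'' statement.

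Because the whole argument is a substitution into an already-established theorem, there is no genuine obstacle. The only points that merit care are confirming convexity of $t\mapsto t^{r}$ for \emph{all} real $r\ge 1$ (not merely integer exponents), and verifying that the power $r$ commutes with the Borel functional calculus so that $\left(|A|^{2}\right)^{r}=|A|^{2r}$, together with the observation that the operator $\Phi^{1/2}(\,\cdot\,)=(\Phi(\,\cdot\,))^{1/2}$ reduces to the ordinary positive square root when $\Phi$ is the identity.
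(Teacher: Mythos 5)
Your proposal is correct and is precisely how the paper obtains this corollary: the statement is an immediate specialization of Theorem \ref{7} with $f(t)=t^{r}$ (increasing and convex, but not necessarily operator convex, for all $r\ge 1$), followed by taking $\Phi$ to be the identity map for the second display and $r=1$ for the final one. The paper gives no separate proof, and your substitutions and simplifications match the intended argument exactly.
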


Assume that ${{A}_{1}},\ldots ,{{A}_{n}}$  are operators and ${{\Phi }_{1}},\ldots ,{{\Phi }_{n}}$  are positive linear maps  with $\sum\nolimits_{i=1}^{n}{{{\Phi }_{i}}\left( I \right)}=I$. For $A,B\in \mathcal{B}\left( \mathcal{H} \right)$ assume that $A\oplus B$ is the operator defined on $\mathcal{B}\left( \mathcal{H}\oplus \mathcal{H} \right)$ by $\left( \begin{matrix}
A & 0  \\
0 & B  \\
\end{matrix} \right)$. Now if we apply Theorem \ref{7} to the operator $A$ on the Hilbert space $\mathcal{H}\oplus \cdots \oplus \mathcal{H}$ defined by $A={{A}_{1}}\oplus \cdots \oplus {{A}_{n}}$ and the positive linear map $\Phi $ defined on $\mathcal{B}\left( \mathcal{H}\oplus \cdots \oplus \mathcal{H} \right)$ by $\Phi \left( A \right)={{\Phi }_{1}}\left( {{A}_{1}} \right)\oplus \cdots \oplus {{\Phi }_{n}}\left( {{A}_{n}} \right)$, then
\[\begin{aligned}
 f\left( {{w}^{2}}\left( \sum\limits_{i=1}^{n}{{{\Phi }_{i}}\left( {{A}_{i}} \right)} \right) \right)&\le \underset{\left\| x \right\|=1}{\mathop{\underset{x\in \mathcal{H}}{\mathop{\sup }}\,}}\,\left( \int_{0}^{1}{f\left( {{\left\| \sum\limits_{i=1}^{n}{\Phi _{i}^{\frac{1}{2}}\left( t{{\left| {{A}_{i}} \right|}^{2}}+\left( 1-t \right){{\left| A_{i}^{*} \right|}^{2}} \right)x} \right\|}^{2}} \right)dt} \right) \\ 
& \le \frac{1}{2}\left\| \sum\limits_{i=1}^{n}{{{\Phi }_{i}}\left( f\left( {{\left| {{A}_{i}} \right|}^{2}} \right)+f\left( {{\left| A_{i}^{*} \right|}^{2}} \right) \right)} \right\|.  
\end{aligned}\]
In particular,
\[\begin{aligned}
 f\left( {{w}^{2}}\left( \sum\limits_{i=1}^{n}{P_{i}^{*}{{A}_{i}}{{P}_{i}}} \right) \right)&\le \underset{\left\| x \right\|=1}{\mathop{\underset{x\in \mathcal{H}}{\mathop{\sup }}\,}}\,\left( \int_{0}^{1}{f\left( {{\left\| \sum\limits_{i=1}^{n}{{{\left( P_{i}^{*}\left( t{{\left| {{A}_{i}} \right|}^{2}}+\left( 1-t \right){{\left| A_{i}^{*} \right|}^{2}} \right){{P}_{i}} \right)}^{\frac{1}{2}}}x} \right\|}^{2}} \right)dt} \right) \\ 
& \le \frac{1}{2}\left\| \sum\limits_{i=1}^{n}{P_{i}^{*}\left( f\left( {{\left| {{A}_{i}} \right|}^{2}} \right)+f\left( {{\left| A_{i}^{*} \right|}^{2}} \right) \right){{P}_{i}}} \right\|  
\end{aligned}\]
where ${{P}_{1}},\ldots {{P}_{n}}$ are contractions with $\sum\nolimits_{i=1}^{n}{P_{i}^{*}{{P}_{i}}}=I$. If ${{A}_{1}},\ldots ,{{A}_{n}}$  are normal operators, then
\[f\left( {{\left\| \sum\limits_{i=1}^{n}{P_{i}^{*}{{A}_{i}}{{P}_{i}}} \right\|}^{2}} \right)\le \underset{\left\| x \right\|=1}{\mathop{\underset{x\in \mathcal{H}}{\mathop{\sup }}\,}}\,\left( f\left( {{\left\| \sum\limits_{i=1}^{n}{{{\left( P_{i}^{*}{{\left| {{A}_{i}} \right|}^{2}}{{P}_{i}} \right)}^{\frac{1}{2}}}x} \right\|}^{2}} \right) \right)\le \left\| \sum\limits_{i=1}^{n}{P_{i}^{*}f\left( {{\left| {{A}_{i}} \right|}^{2}} \right){{P}_{i}}} \right\|.\]
\begin{proposition}\label{8}
Let $\Phi :\mathcal{B}\left( \mathcal{H} \right)\to \mathcal{B}\left( \mathcal{H} \right)$ be a unital positive linear map and let $A\in \mathcal{B}\left( \mathcal{H} \right)$. If $f:[0,\infty)\to [0,\infty)$ is an  increasing operator convex function, then
\[f\left( {{w}^{2}}\left( \Phi \left( A \right) \right) \right)\le \left\| \Phi \left( \int_{0}^{1}{f\left( t{{\left| A \right|}^{2}}+\left( 1-t \right){{\left| {{A}^{*}} \right|}^{2}} \right)dt} \right) \right\|\le \frac{1}{2}\left\| \Phi \left( f\left( {{\left| A \right|}^{2}} \right)+f\left( {{\left| {{A}^{*}} \right|}^{2}} \right) \right) \right\|.\]
In particular, for any $1\le r\le 2$
\[{{w}^{2r}}\left( A \right)\le \left\| \int_{0}^{1}{{{\left( t{{\left| A \right|}^{2}}+\left( 1-t \right){{\left| {{A}^{*}} \right|}^{2}} \right)}^{r}}dt} \right\|\le \frac{1}{2}\left\| {{\left| A \right|}^{2r}}+{{\left| {{A}^{*}} \right|}^{2r}} \right\|.\]
\end{proposition}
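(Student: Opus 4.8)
The plan is to combine the Cartesian-decomposition estimate already established inside the proof of Theorem \ref{7} with the Choi--Davis inequality \eqref{choi-davis-intro}, which is the extra ingredient that operator convexity (as opposed to mere convexity) supplies. Throughout I write $X_t=t|A|^2+(1-t)|A^*|^2$, a positive operator for each $t\in[0,1]$, and fix a unit vector $x\in\mathcal H$. First I would reuse the chain \eqref{9} verbatim: via $A=B+iC$ together with \eqref{4}, \eqref{6}, \eqref{kadison_intro} and \eqref{conv_inner_phi_intro} one obtains
$$f\left(|\langle \Phi(A)x,x\rangle|^2\right)\le f\left(\frac{\langle \Phi(|A|^2)x,x\rangle+\langle \Phi(|A^*|^2)x,x\rangle}{2}\right).$$
Applying the left-hand Hermite--Hadamard inequality \eqref{13} with $a=\langle \Phi(|A|^2)x,x\rangle$ and $b=\langle \Phi(|A^*|^2)x,x\rangle$, and noting that $ta+(1-t)b=\langle \Phi(X_t)x,x\rangle$ by linearity of $\Phi$, the right-hand side is dominated by $\int_0^1 f(\langle \Phi(X_t)x,x\rangle)\,dt$.

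The key step, where operator convexity is essential, is to bound the integrand. Since $\Phi(X_t)$ is a positive self-adjoint operator and $f$ is (in particular) convex, Lemma \ref{15} gives $f(\langle \Phi(X_t)x,x\rangle)\le\langle f(\Phi(X_t))x,x\rangle$; then the Choi--Davis inequality \eqref{choi-davis-intro} for the unital positive map $\Phi$ and the operator convex $f$ yields $f(\Phi(X_t))\le\Phi(f(X_t))$, hence $\langle f(\Phi(X_t))x,x\rangle\le\langle \Phi(f(X_t))x,x\rangle$. Integrating over $t$ and pulling the integral through the linear map and the inner product gives
$$f\left(|\langle \Phi(A)x,x\rangle|^2\right)\le\left\langle \Phi\left(\int_0^1 f(X_t)\,dt\right)x,x\right\rangle.$$
Taking the supremum over unit vectors $x$ now produces the first asserted inequality: on the left, since $f$ is increasing and continuous, $\sup_x f(|\langle \Phi(A)x,x\rangle|^2)=f(w^2(\Phi(A)))$; on the right, $\Phi(\int_0^1 f(X_t)\,dt)$ is positive (because $f\ge 0$ and $\Phi$ is positive), so the supremum of its diagonal values equals its norm.

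For the second inequality I would argue entirely in the operator order. For each fixed $t$, operator convexity of $f$ gives $f(X_t)\le t f(|A|^2)+(1-t)f(|A^*|^2)$; integrating over $[0,1]$ yields $\int_0^1 f(X_t)\,dt\le\tfrac12\big(f(|A|^2)+f(|A^*|^2)\big)$. Applying the positive, hence order-preserving, map $\Phi$ and then using that the operator norm is monotone on positive operators gives the middle-to-right bound. The ``In particular'' statement follows by taking $\Phi$ to be the identity map and $f(t)=t^r$ with $1\le r\le 2$, which is increasing and operator convex on $[0,\infty)$, so that $f(w^2(A))=w^{2r}(A)$ and the two norms reduce to $\|\int_0^1(t|A|^2+(1-t)|A^*|^2)^r\,dt\|$ and $\tfrac12\||A|^{2r}+|A^*|^{2r}\|$.

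The argument is essentially mechanical once the tools are lined up; the only genuinely delicate point is the double estimate $f(\langle \Phi(X_t)x,x\rangle)\le\langle f(\Phi(X_t))x,x\rangle\le\langle \Phi(f(X_t))x,x\rangle$, which is precisely where operator convexity cannot be weakened to ordinary convexity, in contrast to the weaker $\Phi^{1/2}$-type bound of Theorem \ref{7}.
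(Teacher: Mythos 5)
Your proof is correct and takes essentially the same route as the paper's: the paper cites the first inequality of Theorem \ref{7} where you re-derive the chain \eqref{9} together with Hermite--Hadamard, and it invokes \eqref{conv_inner_phi_intro} in one stroke where you factor the same estimate through Lemma \ref{15} and Choi--Davis. One small correction to your closing commentary: that double estimate already holds for ordinary convex $f$ via \eqref{conv_inner_phi_intro}, so operator convexity is genuinely indispensable only for the operator-order bound $\int_{0}^{1}f\left(t|A|^{2}+(1-t)|A^{*}|^{2}\right)dt\le\frac{1}{2}\left(f\left(|A|^{2}\right)+f\left(|A^{*}|^{2}\right)\right)$.
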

\begin{proof}
We have
\[\begin{aligned}
 f\left( \left\langle \Phi \left( t{{\left| A \right|}^{2}}+\left( 1-t \right){{\left| {{A}^{*}} \right|}^{2}} \right)x,x \right\rangle  \right)&\le \left\langle \Phi \left( f\left( t{{\left| A \right|}^{2}}+\left( 1-t \right){{\left| {{A}^{*}} \right|}^{2}} \right) \right)x,x \right\rangle  \\ 
& \le \left\langle \Phi \left( tf\left( {{\left| A \right|}^{2}} \right)+\left( 1-t \right)f\left( {{\left| {{A}^{*}} \right|}^{2}} \right) \right)x,x \right\rangle  \\ 
& =t\left\langle \Phi \left( f\left( {{\left| A \right|}^{2}} \right) \right)x,x \right\rangle +\left( 1-t \right)\left\langle \Phi \left( f\left( {{\left| {{A}^{*}} \right|}^{2}} \right) \right)x,x \right\rangle   
\end{aligned}\]
for any unit vector $x\in \mathcal{H}$. Now, integrating over $t$ on $\left[ 0,1 \right]$,
\[\begin{aligned}
 \int_{0}^{1}{f\left( \left\langle \Phi \left( t{{\left| A \right|}^{2}}+\left( 1-t \right){{\left| {{A}^{*}} \right|}^{2}} \right)x,x \right\rangle  \right)dt}&\le \left\langle \Phi \left( \int_{0}^{1}{f\left( t{{\left| A \right|}^{2}}+\left( 1-t \right){{\left| {{A}^{*}} \right|}^{2}} \right)dt} \right)x,x \right\rangle  \\ 
& \le \frac{1}{2}\left\langle \Phi \left( f\left( {{\left| A \right|}^{2}} \right)+f\left( {{\left| {{A}^{*}} \right|}^{2}} \right) \right)x,x \right\rangle  \\ 
& \le \frac{1}{2}\left\| \Phi \left( f\left( {{\left| A \right|}^{2}} \right)+f\left( {{\left| {{A}^{*}} \right|}^{2}} \right) \right) \right\|.  
\end{aligned}\]
By taking supremum we get
\[\begin{aligned}
 \underset{\left\| x \right\|=1}{\mathop{\underset{x\in \mathcal{H}}{\mathop{\sup }}\,}}\,\left( \int_{0}^{1}{f\left( \left\langle \Phi \left( t{{\left| A \right|}^{2}}+\left( 1-t \right){{\left| {{A}^{*}} \right|}^{2}} \right)x,x \right\rangle  \right)dt} \right)&\le \left\| \Phi \left( \int_{0}^{1}{f\left( t{{\left| A \right|}^{2}}+\left( 1-t \right){{\left| {{A}^{*}} \right|}^{2}} \right)dt} \right) \right\| \\ 
& \le \frac{1}{2}\left\| \Phi \left( f\left( {{\left| A \right|}^{2}} \right)+f\left( {{\left| {{A}^{*}} \right|}^{2}} \right) \right) \right\|.  
\end{aligned}\]
Taking into account the first inequality in Theorem \ref{7}, we get the desired result.
\end{proof}

In the next result, we present a two-operator version. In \cite[Theorem 1]{sab}, part of this result was shown under the additional condition that $f$ is geometrically convex. In the next result, we remove this condition and we still obtain a better estimate. 
\begin{theorem}
	Let $A\in \mathcal{B}\left( \mathcal{H} \right)$, $f:[0,\infty)\to [0,\infty)$ be an increasing convex function. Then
\[f\left( w\left( {{B}^{*}}A \right) \right)\le \underset{\left\| x \right\|=1}{\mathop{\underset{x\in \mathcal{H}}{\mathop{\sup }}\,}}\,\left( \int_{0}^{1}{f\left( {{\left\| {{\left( t{{\left| A \right|}^{2}}+\left( 1-t \right){{\left| B \right|}^{2}} \right)}^{\frac{1}{2}}}x \right\|}^{2}} \right)dt} \right)\le \frac{1}{2}\left\| f\left( {{\left| A \right|}^{2}} \right)+f\left( {{\left| B \right|}^{2}} \right) \right\|.\]
\end{theorem}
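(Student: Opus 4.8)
The plan is to follow the same scheme as the proof of Theorem \ref{thm_1}, replacing the mixed Schwarz inequality of Lemma \ref{14} by the ordinary Cauchy--Schwarz inequality, which is the natural tool for the two-operator quantity $w(B^*A)$ (here $B\in\mathcal{B}(\mathcal{H})$). Throughout, I fix a unit vector $x\in\mathcal{H}$ and abbreviate $T_t=t|A|^2+(1-t)|B|^2$, a positive operator, so that $\langle T_t x,x\rangle=\|T_t^{1/2}x\|^2=t\langle |A|^2x,x\rangle+(1-t)\langle |B|^2x,x\rangle$. This identity is the hinge of the argument: it converts the operator-square-root expression appearing in the middle term of the statement into the scalar convex combination on which the Hermite--Hadamard inequality \eqref{13} can act.

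For the left inequality I would start from $\langle B^*Ax,x\rangle=\langle Ax,Bx\rangle$, so that Cauchy--Schwarz gives $|\langle B^*Ax,x\rangle|\le\|Ax\|\,\|Bx\|=\sqrt{\langle |A|^2x,x\rangle\,\langle |B|^2x,x\rangle}$. Applying the AM--GM inequality and then the monotonicity of $f$ yields $f(|\langle B^*Ax,x\rangle|)\le f\left(\tfrac12\left(\langle |A|^2x,x\rangle+\langle |B|^2x,x\rangle\right)\right)$, and the left half of \eqref{13}, with $a=\langle |A|^2x,x\rangle$ and $b=\langle |B|^2x,x\rangle$, bounds this by $\int_0^1 f(\langle T_t x,x\rangle)\,dt=\int_0^1 f(\|T_t^{1/2}x\|^2)\,dt$. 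Taking the supremum over unit vectors $x$, and using that the increasing (continuous) $f$ commutes with the supremum, produces $f(w(B^*A))$ on the left.

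For the right inequality I would use the same identity $\langle T_t x,x\rangle=t\langle |A|^2x,x\rangle+(1-t)\langle |B|^2x,x\rangle$ and apply the right half of \eqref{13} to obtain $\int_0^1 f(\langle T_t x,x\rangle)\,dt\le\tfrac12\left(f(\langle |A|^2x,x\rangle)+f(\langle |B|^2x,x\rangle)\right)$. Lemma \ref{15}, applied to the self-adjoint operators $|A|^2$ and $|B|^2$, then bounds the right-hand side by $\tfrac12\langle (f(|A|^2)+f(|B|^2))x,x\rangle\le\tfrac12\|f(|A|^2)+f(|B|^2)\|$, the last step holding because $f(|A|^2)+f(|B|^2)$ is positive. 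Taking the supremum over $x$ closes the chain.

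The computations are routine; the point worth emphasizing is that the two-operator setting genuinely removes the operator-convexity hypothesis needed in Theorem \ref{thm_1}. There, operator convexity was used to collapse $f(t|A|+(1-t)|A^*|)$ into a single operator whose integral sits inside one inner product; here the middle term is deliberately kept as a supremum of the scalar integral $\int_0^1 f(\|T_t^{1/2}x\|^2)\,dt$, so that $f$ is only ever evaluated at scalars and only the scalar convexity of $f$, together with Lemma \ref{15}, is invoked. I therefore expect no real obstacle beyond carefully justifying the interchange of $f$ with the supremum (valid since $f$ is increasing and continuous) and recalling that positivity turns the final numerical radius into the operator norm.
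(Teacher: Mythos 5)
Your proof is correct and follows essentially the same route as the paper's: Cauchy--Schwarz plus AM--GM for the left bound, the Hermite--Hadamard inequality \eqref{13} applied to the scalars $\left\langle |A|^2x,x\right\rangle$ and $\left\langle |B|^2x,x\right\rangle$ for the middle term, and Lemma \ref{15} (the paper invokes its $\Phi$-version \eqref{conv_inner_phi_intro} with $\Phi$ the identity) for the right bound. The only cosmetic difference is that you make explicit the identity $\left\langle T_t x,x\right\rangle=\| T_t^{1/2}x\|^2$ that the paper leaves implicit.
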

\begin{proof}
In \eqref{13}, let $a=\left\langle {{\left|  A  \right|}^{2}}x,x \right\rangle$ and $b=\left\langle {{\left|  {{B}^{*}}  \right|}^{2}}x,x \right\rangle$,   where $x\in \mathcal{H}$ is a unit vector. Then
\begin{equation}\label{12}
\begin{aligned}
 f\left( \frac{\left\langle {{\left|  A  \right|}^{2}}x,x \right\rangle +\left\langle {{\left|  B  \right|}^{2}}x,x \right\rangle }{2} \right)&\le \int_{0}^{1}{f\left( \left\langle \left(t{{\left|  A  \right|}^{2}}+\left( 1-t \right){{\left|  B  \right|}^{2}}\right)x,x \right\rangle  \right)dt} \\ 
& \le \frac{f\left( \left\langle {{\left|  A  \right|}^{2}}x,x \right\rangle  \right)+f\left( \left\langle {{\left|  B  \right|}^{2}}x,x \right\rangle  \right)}{2}.  
\end{aligned}
\end{equation}
Utilizing arithmetic-geometric mean inequality and Schwarz inequality, we have
\begin{equation}\label{10}
\begin{aligned}
 f\left( \left| \left\langle  {{B}^{*}}  A x,x \right\rangle  \right| \right)& =f\left( \left| \left\langle  A x, B x \right\rangle  \right| \right) \\ 
& \le f\left( \left\|  A x \right\|\left\|  B x \right\| \right) \\ 
& =f\left( \sqrt{\left\langle {{\left|  A  \right|}^{2}}x,x \right\rangle \left\langle {{\left|  B  \right|}^{2}}x,x \right\rangle } \right) \\ 
& \le f\left( \frac{\left\langle {{\left|  A  \right|}^{2}}x,x \right\rangle +\left\langle {{\left|  B  \right|}^{2}}x,x \right\rangle }{2} \right).  
\end{aligned}
\end{equation}
On the other hand, \eqref{conv_inner_phi_intro} implies
\begin{equation}\label{11}
\frac{f\left( \left\langle {{\left| A  \right|}^{2}}x,x \right\rangle  \right)+f\left( \left\langle {{\left|  B  \right|}^{2}}x,x \right\rangle  \right)}{2}\le \frac{1}{2}\left\langle \left(f\left( {{\left|  A  \right|}^{2}} \right)+f\left( {{\left|  B  \right|}^{2}} \right)\right)x,x \right\rangle.
\end{equation}
Combining  \eqref{12}, \eqref{10}, and \eqref{11}, we get
\[\begin{aligned}
 f\left( \left| \left\langle  {{B}^{*}}  A x,x \right\rangle  \right| \right)&\le \int_{0}^{1}{f\left( \left\langle \left(t{{\left|  A  \right|}^{2}}+\left( 1-t \right){{\left|  B  \right|}^{2}}\right)x,x \right\rangle  \right)dt} \\ 
& \le \frac{1}{2}\left\langle \left(f\left( {{\left|  A  \right|}^{2}} \right)+f\left( {{\left|  B  \right|}^{2}} \right)\right)x,x \right\rangle   
\end{aligned}\]
for any unit vector $x\in \mathcal{H}$. Taking the supremum over $x$ implies the desired inequalities.
\end{proof}

Using similar argument as in Proposition \ref{8}, we get the following result.
\begin{proposition}
	Let $A\in \mathcal{B}\left( \mathcal{H} \right)$, $f:[0,\infty)\to [0,\infty)$ be an increasing operator convex function. Then
	\[f\left( w\left( {{B}^{*}}A \right) \right)\le \left\| \int_{0}^{1}{f\left( t{{\left| A \right|}^{2}}+\left( 1-t \right){{\left| B \right|}^{2}} \right)dt} \right\|\le \frac{1}{2}\left\| f\left( {{\left| A \right|}^{2}} \right)+f\left( {{\left| B \right|}^{2}} \right) \right\|.\]
In particular, for any $1\le r\le 2$	
	\[{{w}^{r}}\left( {{B}^{*}}A \right)\le \left\| \int_{0}^{1}{{{\left( t{{\left| A \right|}^{2}}+\left( 1-t \right){{\left| B \right|}^{2}} \right)}^{r}}dt} \right\|\le \frac{1}{2}\left\| {{\left| A \right|}^{2r}}+{{\left| B \right|}^{2r}} \right\|.\]
\end{proposition}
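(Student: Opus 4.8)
The plan is to mimic the proof of Proposition \ref{8}, now with the pair $|A|^2,|B|^2$ in place of $|A|^2,|A^*|^2$ and with no positive map present (equivalently, with $\Phi=\mathrm{id}$). The anchor for the lower bound is the first inequality of the preceding two-operator theorem; rewriting $\|(t|A|^2+(1-t)|B|^2)^{1/2}x\|^2=\langle(t|A|^2+(1-t)|B|^2)x,x\rangle$, that inequality reads
\[f(w(B^*A))\le \sup_{\substack{x\in\mathcal{H}\\\|x\|=1}}\int_0^1 f\big(\langle(t|A|^2+(1-t)|B|^2)x,x\rangle\big)\,dt,\]
so it remains only to dominate the right-hand side by the operator integral.

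First I would fix a unit vector $x$ and, for each $t\in[0,1]$, apply Lemma \ref{15} to the self-adjoint operator $t|A|^2+(1-t)|B|^2$, obtaining
\[f\big(\langle(t|A|^2+(1-t)|B|^2)x,x\rangle\big)\le \big\langle f(t|A|^2+(1-t)|B|^2)x,x\big\rangle.\]
Then, invoking operator convexity of $f$ in the form $f(t|A|^2+(1-t)|B|^2)\le tf(|A|^2)+(1-t)f(|B|^2)$ and pairing against $x$, I would get
\[\big\langle f(t|A|^2+(1-t)|B|^2)x,x\big\rangle\le t\langle f(|A|^2)x,x\rangle+(1-t)\langle f(|B|^2)x,x\rangle.\]

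Next I would integrate both estimates over $t\in[0,1]$, moving the integral inside the inner product, to arrive at
\[\int_0^1 f\big(\langle(t|A|^2+(1-t)|B|^2)x,x\rangle\big)\,dt\le \Big\langle\int_0^1 f(t|A|^2+(1-t)|B|^2)\,dt\,x,x\Big\rangle\le \tfrac{1}{2}\big\langle(f(|A|^2)+f(|B|^2))x,x\big\rangle.\]
Bounding each inner product by the corresponding operator norm and taking the supremum over unit vectors $x$ then yields
\[\sup_{\substack{x\in\mathcal{H}\\\|x\|=1}}\int_0^1 f\big(\langle(t|A|^2+(1-t)|B|^2)x,x\rangle\big)\,dt\le\Big\|\int_0^1 f(t|A|^2+(1-t)|B|^2)\,dt\Big\|\le\tfrac{1}{2}\|f(|A|^2)+f(|B|^2)\|,\]
and chaining this with the anchoring inequality above completes the main chain. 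The particular case is immediate upon taking $f(t)=t^r$ for $1\le r\le 2$, which is increasing and operator convex, so that $f(w(B^*A))=w^r(B^*A)$, $f(|A|^2)=|A|^{2r}$ and $f(|B|^2)=|B|^{2r}$.

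The hard part, exactly as in Proposition \ref{8}, is to keep track of which convexity hypothesis each step consumes: Lemma \ref{15} needs only ordinary convexity, but the splitting $f(t|A|^2+(1-t)|B|^2)\le tf(|A|^2)+(1-t)f(|B|^2)$ is a genuine operator inequality and therefore requires the full strength of operator convexity. This is precisely why the hypothesis here is strengthened from the merely convex $f$ of the two-operator theorem to operator convex, and it is the one step that cannot be relaxed.
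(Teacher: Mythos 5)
Your proof is correct and is essentially the argument the paper intends: the paper gives no details here, saying only ``using similar argument as in Proposition \ref{8},'' and your chain --- anchor on the first inequality of the two-operator theorem, apply Lemma \ref{15} to $t|A|^2+(1-t)|B|^2$, use operator convexity for the splitting, integrate, and take the supremum --- is precisely that argument with $\Phi=\mathrm{id}$. Your closing remark correctly identifies the splitting step as the one place where operator convexity (rather than mere convexity) is genuinely needed.
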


{\tiny \vskip 0.3 true cm }

{\tiny (M. Sababheh) Department of Basic Sciences, Princess Sumaya University For Technology, Al Jubaiha, Amman 11941, Jordan.}

{\tiny \textit{E-mail address:} sababheh@psut.edu.jo}

{\tiny \vskip 0.3 true cm }

{\tiny (H.R. Moradi) Department of Mathematics, Payame Noor University (PNU), P.O. Box 19395-4697, Tehran, Iran.}

{\tiny \textit{E-mail address:} hrmoradi@mshdiau.ac.ir }

\end{document}